\newcommand{\N}{\mathbb N}
\newcommand{\R}{\mathbb R}
\newcommand{\C}{\mathbb C}
\renewcommand{\P}{\mathbb P}
\newcommand{\sgn}{\operatorname{sgn}}
\newcommand{\cc}{^{\operatorname{c}}}
\newcommand{\tS}{\tilde{S}}
\newcommand{\tR}{\tilde{R}}
\begin{document}


\title{A Lower Bound for the Norm of the Minimal Residual Polynomial\footnote{published in: Constructive Approximation {\bf 33} (2011), 425--432.}}
\author{Klaus Schiefermayr\footnote{University of Applied Sciences Upper Austria, School of Engineering and Environmental Sciences, Stelzhamerstrasse\,23, 4600 Wels, Austria, \textsc{klaus.schiefermayr@fh-wels.at}}}
\date{}
\maketitle

\theoremstyle{plain}
\newtheorem{theorem}{Theorem}
\newtheorem{corollary}{Corollary}
\newtheorem{lemma}{Lemma}
\newtheorem{definition}{Definition}
\theoremstyle{definition}
\newtheorem*{remark}{Remark}

\begin{abstract}
Let $S$ be a compact infinite set in the complex plane with $0\notin{S}$, and let $R_n$ be the minimal residual polynomial on $S$, i.e., the minimal polynomial of degree at most $n$ on $S$ with respect to the supremum norm provided that $R_n(0)=1$. For the norm $L_n(S)$ of the minimal residual polynomial, the limit $\kappa(S):=\lim_{n\to\infty}\sqrt[n]{L_n(S)}$ exists. In addition to the well-known and widely referenced inequality $L_n(S)\geq\kappa(S)^n$, we derive the sharper inequality $L_n(S)\geq2\kappa(S)^n/(1+\kappa(S)^{2n})$ in the case that $S$ is the union of a finite number of real intervals. As a consequence, we obtain a slight refinement of the Bernstein--Walsh Lemma.
\end{abstract}

\noindent\emph{Mathematics Subject Classification (2000):} 41A17, 41A29, 65F10

\noindent\emph{Keywords:} Bernstein--Walsh lemma, Estimated asymptotic convergence factor, Inequality, Inverse polynomial image, Minimal residual polynomial, Minimum deviation

\section{Introduction}


Let $S$ be a compact infinite set in the complex plane with $0\notin{S}$, and let $\P_n$ denote the set of all polynomials of degree at most $n$ with complex coefficients. For a polynomial $P_n\in\P_n$, let the supremum norm $\|\cdot\|_S$ associated with $S$ be defined by
\[
\|P_n\|_S:=\max_{z\in{S}}|P_n(z)|.
\]
Consider the following approximation problem: Find that polynomial $R_n\in\P_n$ with $R_n(0)=1$, for which the supremum norm on $S$ is minimal, i.e.,
\begin{equation}\label{Ln}
L_n(S):=\|R_n\|_S=\min\bigl\{\|P_n\|_S:P_n\in\P_n,P_n(0)=1\bigr\}.
\end{equation}
The optimal polynomial $R_n\in\P_n$ is unique and called the \emph{minimal residual polynomial} for the degree $n$ on $S$, and the quantity $L_n(S)$ is called the minimum deviation of $R_n$ on $S$. Note that we say \emph{for} the degree $n$ but not \emph{of} degree $n$ since it turns out that the minimal residual polynomial for the degree $n$ is a polynomial of degree $n$ or $n-1$ if $S$ is a real set. It is known, see \cite{Kuijlaars} or \cite{DTT}, that the limit
\begin{equation}\label{kappa}
\kappa(S):=\lim_{n\to\infty}\sqrt[n]{L_n(S)}
\end{equation}
exists, where $\kappa(S)$ is usually called the \emph{estimated asymptotic convergence factor}. This factor $\kappa(S)$ has a very nice representation in  terms of the corresponding Green's function for the complement of the set $S$, see the beginning of Section\,3 and in particular formula \eqref{kappa-g}.


The approximation problem \eqref{Ln} and the convergence factor \eqref{kappa} arise for instance in the context of solving large linear systems $Ax=b$ by Krylov subspace methods, where the spectrum of $A$ is approximated by the set $S$. There is an enormous literature on this subject, hence we would like to mention only three references, the review of Discroll, Toh and Trefethen\,\cite{DTT}; the book of Fischer\,\cite{Fischer-Book}; and the review of Kuijlaars\,\cite{Kuijlaars}.


For $S\subset\C$, the lower bound
\[
L_n(S)\geq\kappa(S)^n
\]
is well known and widely referenced, see \cite{Kuijlaars} or \cite{DTT}, and can be proven with the help of the Bernstein-Walsh lemma, see \cite[Sec.\,5.5]{Ransford}. In the present paper, for the union of a finite number of real intervals, we obtain the sharper lower bound
\begin{equation}\label{LB0}
L_n(S)\geq\frac{2\,\kappa(S)^n}{1+\kappa(S)^{2n}},
\end{equation}
and, in addition, we give sets $S$ for which equality is attained in \eqref{LB0}. Moreover, from the proof of \eqref{LB0}, we obtain a refinement for the Bernstein-Walsh lemma~\cite[Thm.\,5.5.7]{Ransford} for the case of several real intervals and real arguments.


The paper is organized as follows. In Section\,2, we prove some essential properties of the minimal residual polynomial on a real set $S$, whereas the main result \eqref{LB0} and the refinement of the Bernstein-Walsh lemma are stated and proved in section\,3.

\section{Properties of Minimal Residual Polynomials on Real Sets}


Let us start with a necessary and sufficient alternation criterion for a polynomial to be the minimal residual polynomial on a real compact set. This criterion can easily be transferred from the results of Achieser\,\cite[Sec.\,1]{Achieser-1932} for the two interval case into the more general case of a compact real set, see \cite[Char.\,4.2]{Fischer-1992} and in particular \cite[Cor.\,3.1.4]{Fischer-Book}.


\begin{theorem}[Achieser\,\cite{Achieser-1932}]\label{Thm-Char}
Let $n\in\N$, and let $S\subset\R$ be compact with at least $n+1$ points and $0\notin{S}$. The polynomial $P_n\in\P_n$ is the minimal residual polynomial on $S$ if and only if there exist $n+1$ points $x_0,x_1,\ldots,x_n\in{S}$, $x_0<x_1<\ldots<x_n$, such that $|P_n(x_j)|=\|P_n\|_S$, $j=0,1,\ldots,n$, and
\begin{equation}\label{eq-delta}
P_n(x_{j})=(-1)^{\delta_j+1}P_n(x_{j+1}),\qquad j=0,1,\ldots,n-1,
\end{equation}
where $\delta_j=1$ if $x_j<0<x_{j+1}$ and $\delta_j=0$ otherwise.
\end{theorem}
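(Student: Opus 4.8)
The plan is to recast the approximation problem \eqref{Ln} as a weighted Chebyshev approximation problem and to combine the classical alternation theorem for Haar systems with an explicit sign bookkeeping; the same conclusion can also be reached by a direct perturbation argument, which I sketch afterwards. One first notes that it suffices to work with real polynomials (the minimal residual polynomial on a real set is real by uniqueness and conjugation, and a rotation reduces the general case to this one). Now every $P_n\in\P_n$ with $P_n(0)=1$ is uniquely of the form $P_n(x)=1+x\,E(x)$ with $E\in\P_{n-1}$, and since $0\notin S$,
\[
|P_n(x)|=|x|\cdot\Bigl|E(x)-\bigl(-\tfrac{1}{x}\bigr)\Bigr|=w(x)\,\bigl|E(x)-g(x)\bigr|\qquad(x\in S),
\]
with $w(x):=|x|>0$ on $S$ and $g(x):=-1/x$. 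Hence $\|P_n\|_S=\|w\,(E-g)\|_S$, so $P_n$ is the minimal residual polynomial exactly when $E$ is the (unique) best weighted approximation $E^*$ to $g$ on $S$ from $\P_{n-1}$; since $\P_{n-1}$ is a Haar system of dimension $n$ on $S$ (in fact on $\operatorname{conv}(S)$, which is all one needs for the interpolation and zero-counting steps of the proof), the weighted alternation theorem characterizes $E^*$ by the existence of $x_0<x_1<\dots<x_n$ in $S$ at which $|w(x_j)(E^*(x_j)-g(x_j))|=\|w\,(E^*-g)\|_S$ and the sign of $w(x_j)(E^*(x_j)-g(x_j))$ strictly alternates with $j$.

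It remains to translate this back. Since
\[
w(x)\bigl(E^*(x)-g(x)\bigr)=|x|\,E^*(x)+\tfrac{|x|}{x}=\sgn(x)\,\bigl(x\,E^*(x)+1\bigr)=\sgn(x)\,P_n(x),
\]
the condition reads: $|P_n(x_j)|=\|P_n\|_S$ for $j=0,\dots,n$, and $\sgn(x_j)\,P_n(x_j)$ alternates in sign. A one-line case check identifies this with \eqref{eq-delta}: if $x_j,x_{j+1}$ lie on the same side of $0$ then $\sgn(x_j)=\sgn(x_{j+1})$, so alternation means $P_n(x_j)=-P_n(x_{j+1})$ (that is, $\delta_j=0$); if $x_j<0<x_{j+1}$ then $\sgn(x_{j+1})=-\sgn(x_j)$, so alternation means $P_n(x_j)=P_n(x_{j+1})$ (that is, $\delta_j=1$); the case $x_j>0>x_{j+1}$ cannot occur. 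With the uniqueness of the minimal residual polynomial this gives both directions.

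For a self-contained argument (which also makes the hypothesis $0\notin S$ visible and avoids relying on the alternation theorem on a disconnected set), introduce on the extremal set $A:=\{x\in S:|P_n(x)|=\|P_n\|_S\}$ the sign pattern $\phi(x):=\sgn(P_n(x))\,\sgn(x)$. \emph{Necessity:} if $P_n$ is optimal but the $n+1$ points of the theorem do not exist, then $\phi$ has at most $n-1$ sign changes on $A$, so there is $r\in\P_{n-1}$ with $\sgn(r)=\phi$ on $A$; then $q(x):=x\,r(x)\in\P_n$ satisfies $q(0)=0$, agrees in sign with $P_n$ throughout $A$ and is nonzero there, whence $\|P_n-\varepsilon q\|_S<\|P_n\|_S$ for small $\varepsilon>0$ — a contradiction. \emph{Sufficiency:} if those points exist and some $Q_n\in\P_n$ with $Q_n(0)=1$ had $\|Q_n\|_S<\|P_n\|_S$, then $D:=P_n-Q_n$ vanishes at $0$, so $D(x)=x\,E(x)$ with $E\in\P_{n-1}$; at each $x_j$ one has $\sgn(D(x_j))=\sgn(P_n(x_j))$, hence $\sgn(E(x_j))=\phi(x_j)$, which by \eqref{eq-delta} alternates with $j$, so $E$ has a zero in each of the $n$ intervals $(x_j,x_{j+1})$ — impossible since $\deg E\le n-1$.

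The point that needs the most care is the exceptional value $\delta_j=1$: it is exactly the footprint of the possibility that the normalization point $0$ falls in a gap between two intervals of $S$, and everything rests on $0\notin S$ (equivalently, on the linear factor $x$ not vanishing on $S$, so that $w>0$ on $S$ and $\P_{n-1}$ is genuinely a Haar system there). A secondary technicality is to dispose of degenerate situations — $P_n$ constant, or $A$ infinite — but there $\phi$ still has only finitely many sign changes, so the finite combinatorial reasoning applies unchanged.
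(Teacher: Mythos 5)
The paper itself offers no proof of Theorem~\ref{Thm-Char}: it is imported from Achieser via Fischer's book, so there is nothing internal to compare with. Judged on its own, your argument is sound and is in fact the classical route to this result. The substitution $P_n(x)=1+xE(x)$, the identity $|P_n(x)|=|x|\,|E(x)+1/x|$ turning \eqref{Ln} into weighted Chebyshev approximation of $-1/x$ from $\P_{n-1}$ with weight $|x|>0$ on $S$, the observation that multiplying a Haar system by a positive weight preserves the Haar property, and the sign bookkeeping converting strict alternation of $\sgn(x)P_n(x)$ into \eqref{eq-delta} (with $\delta_j=1$ exactly across the gap containing $0$) are all correct; so is your self-contained version, which is the same proof with the alternation theorem unpacked into the standard perturbation step (necessity) and zero count of $(P_n-Q_n)/x$ (sufficiency). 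Your remark that only the Haar property on $\operatorname{conv}(S)$ is needed for the interpolation/zero-counting steps correctly disposes of the worry about $S$ being disconnected.

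The one step that needs repair is the reduction to real polynomials in the \emph{sufficiency} direction. The paper's $\P_n$ has complex coefficients, and the hypothesis of that direction does not tell you $P_n$ is real; your parenthetical ``a rotation reduces the general case to this one'' does not work, because multiplying $P_n$ by a unimodular constant destroys the normalization $P_n(0)=1$, and your step $\sgn(D(x_j))=\sgn(P_n(x_j))$ presupposes real values at the $x_j$. (For necessity the reduction is fine: the optimal polynomial is real by uniqueness and conjugation, or by taking real parts of the coefficients.) The patch is small: \eqref{eq-delta} forces $P_n(x_j)=\varepsilon_j c$ with $\varepsilon_j=\pm1$ and a fixed $c\in\C$, $|c|=\|P_n\|_S>0$; if $Q_n(0)=1$ and $\|Q_n\|_S<\|P_n\|_S$, run your zero count on the real polynomial $h(x):=\re\bigl(\bar c\,(P_n(x)-Q_n(x))\bigr)$, which vanishes at $0$ and satisfies $\sgn(h(x_j))=\varepsilon_j$, so that $h(x)/x\in\P_{n-1}$ (real) has $n$ sign changes --- the same contradiction. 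A lesser point, worth a sentence rather than a fix: in the necessity perturbation, $\|P_n-\varepsilon q\|_S<\|P_n\|_S$ requires the routine compactness argument (a relative neighbourhood of the extremal set where $qP_n>0$, and $|P_n|\le\|P_n\|_S-\delta$ off it), which you left implicit.
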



In the following, we will need the notion of inverse polynomial images. As usual, for a polynomial $P_n\in\P_n\setminus\P_{n-1}$, let us denote by
\[
P_n^{-1}([-1,1]):=\bigl\{z\in\C:P_n(z)\in[-1,1]\bigr\}
\]
the inverse image of $[-1,1]$ with respect to the polynomial mapping $P_n$. Inverse polynomial images are often used in the literature, see, e.g., \cite{Aptekarev}, \cite{Peh1996}, \cite{Peh2003}, \cite{Totik-2001}, \cite{Totik-2002} and \cite{Totik-2009}. In general, $P_n^{-1}([-1,1])$ consists of $n$ Jordan arcs in the complex plane, on which $P_n$ is strictly monotone increasing from $-1$ to $+1$, see \cite{Peh2003}. Of special interest in the context of the present paper is the case when $P_n^{-1}([-1,1])$ is a subset of the real line. The next lemma, due to Peherstorfer\,\cite[Cor.\,2.3]{Peh2003}, gives a necessary and sufficient condition for this case.


\begin{lemma}[Peherstorfer\,\cite{Peh2003}]\label{Lemma-RealInverseImage}
Let $P_n\in\P_n\setminus\P_{n-1}$, $n\in\N$, and let $A:=P_n^{-1}([-1,1])\subset\C$. Then $A\subset\R$ if and only if all coefficients of $P_n$ are real, $P_n$ has $n$ simple real zeros and $\min\{|P_n(y)|:P_n'(y)=0\}\geq1$. In case of $A\subset\R$, $A$ consists of $\ell$ finite intervals with $1\leq\ell\leq{n}$, where $\ell-1$ is the number of the zeros $y$ of $P_n'$, for which $|P_n(y)|>1$.
\end{lemma}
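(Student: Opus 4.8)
The statement is an ``if and only if'' with an extra count of connected components; the plan is to prove the ``if'' direction by tracking $P_n$ along the real axis, and the ``only if'' direction by a local normal-form argument at the zeros and critical points of $P_n$.

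\emph{Sufficiency.} Suppose $P_n$ is real, has $n$ simple real zeros $x_1<\dots<x_n$, and $\min\{|P_n(y)|:P_n'(y)=0\}\ge1$. By Rolle's theorem and $\deg P_n'=n-1$, the derivative has exactly $n-1$ zeros $y_1<\dots<y_{n-1}$, one in each gap $(x_i,x_{i+1})$, all real and simple; they split $\R$ into $n$ laps on which $P_n$ is strictly monotone. The values $P_n(-\infty),P_n(y_1),\dots,P_n(y_{n-1}),P_n(+\infty)$ alternate in sign (exactly one zero of $P_n$ lies between consecutive critical points, and one beyond the extreme ones) and have modulus $\ge1$. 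Hence, for each $t\in(-1,1)$, the two boundary values on each lap lie strictly on opposite sides of $t$, so $P_n(x)=t$ has exactly one solution per lap; that is $n$ real solutions, hence all of them, each lying in $A$. Thus $P_n^{-1}((-1,1))\subset\R$, and since $P_n$ is an open mapping, $A=\overline{P_n^{-1}((-1,1))}\subset\overline{\R}=\R$. Restricting $P_n$ to a lap, $A$ meets it in a single non-degenerate closed interval, and the intervals from two neighbouring laps abut at the common critical point $y_i$ exactly when $|P_n(y_i)|=1$; counting merges, $A$ is a union of $\ell=1+\#\{i:|P_n(y_i)|>1\}$ intervals, which gives $1\le\ell\le n$ and the claimed description of $\ell-1$.

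\emph{Necessity.} Suppose $A\subset\R$. Because $0\in[-1,1]$, all zeros of $P_n$ lie in $A$, hence are real, so $P_n(z)=c\prod_{j=1}^n(z-x_j)$ with $x_j\in\R$; any zero $z_0$ of $P_n-1$ lies in $A\subset\R$ and is distinct from the $x_j$, so $c\prod_j(z_0-x_j)=1$ gives $c\in\R$ and $P_n$ has real coefficients. The crucial local fact I would prove next is: \emph{if $P_n(p)\in(-1,1)$ then $p$ is a regular point of $P_n$.} Indeed, in the local normal form $P_n(z)-P_n(p)=\xi(z)^k$ — with $\xi$ a biholomorphism near $p$, $\xi(p)=0$, and $k$ the local degree — the set $\{z:P_n(z)\in\R\}$ is, near $p$, the $\xi$-image of $2k$ rays issuing from $0$ with $2k$ distinct directions; shrinking the neighbourhood it coincides with $\{z:P_n(z)\in(-1,1)\}\subseteq A\subseteq\R$, and only two branches of a curve can lie along $\R$ at the real point $p$, so $2k\le2$ and $k=1$. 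Taking $p=x_j$ (value $0$) shows the zeros of $P_n$ are simple; then Rolle plus $\deg P_n'=n-1$ shows $P_n'$ has $n-1$ simple real zeros and no others, so each critical point $y$ is real with $P_n(y)\in\R$; taking $p=y$ in the local fact forces $P_n(y)\notin(-1,1)$, i.e.\ $|P_n(y)|\ge1$.

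The lap bookkeeping in the sufficiency direction is routine. The main obstacle is the necessity direction, and within it the local arc-counting step: this is the one place where the hypothesis $A\subset\R$ is actually used, and it must be set up so as to yield simultaneously the simplicity of the zeros and the bound $|P_n(y)|\ge1$ at every critical point. A secondary point needing care is fixing the reality of the coefficients before Rolle's theorem is applied, and justifying that $A$ equals the closure of $P_n^{-1}((-1,1))$ (equivalently, that the $\pm1$-level set of $P_n$ is also real).
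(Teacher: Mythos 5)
Your proof is correct, but note that the paper itself offers no argument to compare it with: Lemma~\ref{Lemma-RealInverseImage} is imported verbatim from Peherstorfer \cite[Cor.~2.3]{Peh2003} and used as a black box, so your write-up is a self-contained substitute rather than a variant of an in-paper proof. Both directions hold up under scrutiny. In the sufficiency part, the lap decomposition via Rolle, the sign alternation of the critical values, the count of one solution of $P_n(x)=t$ per lap for $t\in(-1,1)$, and the identification $A=\overline{P_n^{-1}((-1,1))}$ via the open mapping property are all sound, and the merge bookkeeping correctly yields $\ell=1+\#\{y:P_n'(y)=0,\ |P_n(y)|>1\}$. In the necessity part, the reality of the coefficients (via a real zero of $P_n-1$), and the local normal form $P_n(z)-P_n(p)=\xi(z)^k$ giving $2k$ arcs with distinct tangent directions inside $\{P_n\in(-1,1)\}\subseteq A\subseteq\R$, hence $k=1$, is exactly the right local fact; applied at the zeros it gives simplicity, and after Rolle places all $n-1$ critical points on $\R$ it gives $|P_n(y)|\geq1$ at each of them, with no circularity since $p$ itself lies in $A\subseteq\R$. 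The only blemishes are cosmetic: the arcs are the preimage under $\xi$ (image under $\xi^{-1}$) of the $2k$ rays, not the ``$\xi$-image'', and in the sufficiency direction you could state explicitly that each lap's preimage of $[-1,1]$ is bounded because $|P_n|\to\infty$ on the two unbounded laps; neither affects the validity of the argument.
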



Theorem\,\ref{Thm-Char} together with Lemma\,\ref{Lemma-RealInverseImage} gives the following result:


\begin{corollary}\label{Cor_InvPolIm}
Let $P_n\in\P_n\setminus\P_{n-1}$, $n\in\N$, such that $A:=P_n^{-1}([-1,1])$ is a subset of the real line and $0\notin{A}$. Then $R_n(x):=P_n(x)/P_n(0)$ is the minimal residual polynomial for the degree $n$ on $A$ with minimum deviation $L_n(A)=1/|P_n(0)|$. In addition, if $0$ is in the convex hull of $A$, then $R_n(x)$ is the minimal residual polynomial also for the degree $n+1$ on $A$, i.e., $L_{n+1}(A)=L_n(A)$.
\end{corollary}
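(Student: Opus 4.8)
The plan is to verify the alternation criterion of Theorem \ref{Thm-Char} for the polynomial $R_n(x):=P_n(x)/P_n(0)$ on the set $A$, which immediately yields both the minimality and the value of the deviation. First I would use Lemma \ref{Lemma-RealInverseImage}: since $A=P_n^{-1}([-1,1])$ is a subset of $\R$, the polynomial $P_n$ has real coefficients, $n$ simple real zeros, and its local extrema have modulus $\geq 1$; moreover $A$ is a union of $\ell$ real intervals with $1\le\ell\le n$. I want to exhibit $n+1$ ordered points $x_0<x_1<\dots<x_n$ in $A$ at which $|R_n|$ attains its norm on $A$ with the prescribed sign pattern. The natural candidates are the $n-\ell+1$ interior extremal points (the zeros $y$ of $P_n'$ with $|P_n(y)|>1$ contribute $\ell-1$ of them, so together with... — more carefully) together with the $2\ell$ endpoints of the $\ell$ intervals; at all of these, $|P_n|$ equals either $1$ (at the interval endpoints and at the "low" critical points of modulus exactly $1$, but those cannot lie strictly inside $A$) so that in fact $|P_n(x_j)|\ge 1$ there, and I need to confirm the count is exactly $n+1$: the $2\ell$ endpoints plus the $\ell-1$ interior critical points with $|P_n|>1$ gives $3\ell-1$ — which is too many, so the right selection is the $2\ell$ endpoints together with the interior critical points, but then I must argue the values match $\|R_n\|_A$.

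Let me restructure: set $\|R_n\|_A=1/|P_n(0)|$ tentatively. On each of the $\ell$ intervals $P_n$ runs monotonically between values of modulus $\ge 1$, and between consecutive intervals $P_n$ exits $[-1,1]$. Since $|R_n(x)| = |P_n(x)|/|P_n(0)|$ and $|P_n(x)|\le$ (its max on $A$), I would first show $\|P_n\|_A$ is attained; because $0\notin A$ we have $|P_n(0)|\ge 1$ only if $0\notin$ the interior... actually $|P_n(0)|$ can be anything positive, but $0\notin A$ forces $|P_n(0)|>1$ when $0$ lies "between" intervals or outside, giving $L_n(A)<1$. The key structural fact is that $P_n$ takes each value in $[-1,1]$ exactly $n$ times counted with the arcs, so $P_n$ restricted to $A$ oscillates and the endpoints of the $\ell$ subintervals are exactly the points where $P_n\in\{-1,+1\}$, arranged so that consecutive such points (within the ordering along $\R$) have the alternating-or-not sign pattern dictated by \eqref{eq-delta}. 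I would carefully track: the right endpoint of one interval and the left endpoint of the next both have $|P_n|=1$; if $0$ lies between them the sign flips (matching $\delta_j=1$), otherwise $P_n$ must have gone out to $+\infty$ or $-\infty$ and come back, forcing equal signs (matching $\delta_j=0$); within a single interval, consecutive extremal points alternate in sign. This gives exactly the required $n+1$ alternation points with $|R_n|=\|R_n\|_A$, so by Theorem \ref{Thm-Char}, $R_n$ is the minimal residual polynomial and $L_n(A)=1/|P_n(0)|$.

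For the second statement, suppose $0$ is in the convex hull of $A$, i.e., $x_0<0<x_n$ where $x_0,x_n$ are the extreme points of $A$. Then among the consecutive pairs $(x_j,x_{j+1})$ of alternation points, exactly one has $x_j<0<x_{j+1}$ (the one bracketing $0$), so $\delta_j=1$ for that pair and $\delta_j=0$ otherwise. I would then view $R_n$ as a polynomial in $\P_{n+1}$ (of degree $n$) with $R_n(0)=1$ and check the alternation criterion of Theorem \ref{Thm-Char} now with $n+2$ points: I need one more alternation point. The point is that the existing $n+1$ points already carry one "sign-preserving" step at the $0$-bracket (since $\delta_j=1$ inverts, so $R_n(x_j)=R_n(x_{j+1})$ there, i.e., no sign change), and for degree $n+1$ one needs a configuration where the total sign behavior is consistent with $n+1$ sign data; concretely, since we may insert $0$ itself conceptually or rather re-examine — the cleanest route is: the criterion for degree $n+1$ requires $n+2$ points with $n+1$ sign relations, and the $n+1$ points $x_0,\dots,x_n$ with the pattern above, when one checks the count of sign changes versus $\delta_j$'s, already satisfy the degree-$(n+1)$ alternation because the presence of $0$ inside the convex hull supplies the extra flexibility: adding any further extremal point is unnecessary since $R_n$ of exact degree $n\le n+1$ with $n+1$ genuine alternations (one of which is a $\delta=1$ "non-change") is enough. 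The hard part will be getting the sign-counting exactly right in this last step — pinning down why $n+1$ alternation points suffice for the degree $n+1$ problem precisely when $0$ is interior to the convex hull, which is where the condition $0\in\operatorname{conv}(A)$ is genuinely used; I expect this to reduce to a parity/counting argument on the $\delta_j$'s combined with the uniqueness of the minimal residual polynomial.
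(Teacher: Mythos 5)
Your overall strategy (verify Achieser's alternation criterion, Theorem~\ref{Thm-Char}, using the structural information from Lemma~\ref{Lemma-RealInverseImage}) is the same as the paper's, but the execution has two genuine errors. First, your sign analysis across the gaps of $A$ is backwards in both directions. Factually: at the two endpoints of any gap, $P_n$ takes \emph{equal} values $\pm1$, whether or not $0$ lies in that gap, because if $P_n$ changed sign across a gap it would take values in $(-1,1)$ there, putting gap points into $A=P_n^{-1}([-1,1])$. And the criterion \eqref{eq-delta} requires \emph{equal} values when $x_j<0<x_{j+1}$ (i.e.\ $\delta_j=1$ gives $(-1)^{\delta_j+1}=+1$) and a \emph{sign change} when $\delta_j=0$ --- the opposite of what you assert. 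The consequence you miss is that both endpoints of a gap \emph{not} containing $0$ can never be consecutive alternation points, so one endpoint per such gap must be dropped. The correct selection is $\{\zeta_1,\dots,\zeta_{n-\ell}\}\cup\{a_1\}\cup\{a_2,a_4,\dots,a_{2\ell}\}$, where the $\zeta_i$ are the critical points with $|P_n(\zeta_i)|=1$; these lie \emph{inside} $A$ (your parenthetical claim that they cannot lie strictly inside $A$ is wrong, and the $\ell-1$ critical points with $|P_n|>1$ that you first propose lie in the gaps, hence outside $A$, and are unusable). Between consecutive $\pm1$-points inside one interval the sign alternates (no critical value has modulus $<1$), and from a right endpoint $a_{2j}$ to the first selected point of the next interval the value flips, matching $\delta=0$; this gives exactly $(n-\ell)+1+\ell=n+1$ points and finishes the first claim, with $L_n(A)=1/|P_n(0)|$ since $\|P_n\|_A=1$.

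Second, for the statement about degree $n+1$ your argument does not work: Theorem~\ref{Thm-Char} applied with $n+1$ in place of $n$ demands $n+2$ alternation points, so ``$n+1$ points plus extra flexibility'' cannot be enough, and you explicitly decline to add a further point. The correct (and short) argument is that when $0\in(a_{2j^*},a_{2j^*+1})$ you \emph{can} add the missing endpoint $a_{2j^*+1}$ to the set above: across that particular gap the equal values $P_n(a_{2j^*})=P_n(a_{2j^*+1})$ are exactly what $\delta=1$ requires, so one obtains $n+2$ points satisfying \eqref{eq-delta} and hence $L_{n+1}(A)=L_n(A)$. This is precisely where the hypothesis $0\in\operatorname{conv}(A)$ enters, and it is the step your proposal leaves unresolved.
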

\begin{proof}
Let $P_n\in\P_n\setminus\P_{n-1}$, $A:=P_n^{-1}([-1,1])$, and suppose that $A\subset\R$ with $0\notin{A}$. By Lemma\,\ref{Lemma-RealInverseImage}, $P_n$ has $n$ simple real zeros, hence $P_n'$ has $n-1$ simple real zeros. Let $\ell-1$ be the number of zeros $\xi_i$ of $P_n'$ for which $|P_n(\xi_i)|>1$. Then, again by Lemma\,\ref{Lemma-RealInverseImage}, the remaining $n-\ell$ zeros $\zeta_i$ of $P_n'$ satisfy $|P_n(\zeta_i)|=1$, and the set $A$ is the union of $\ell$ finite intervals, say $A=\bigcup_{j=1}^{\ell}[a_{2j-1},a_{2j}]$, $a_1<a_2<\ldots<a_{2\ell}$. Now, if the point $0$ is outside the convex hull of $A$, then the set of $n+1$ points $\{x_0,x_1,\ldots,x_n\}$ which satisfies \eqref{eq-delta} is given by $\{\zeta_1,\ldots,\zeta_{n-\ell}\}\cup\{a_1\}\cup\{a_2,a_4,\ldots,a_{2\ell}\}$. If the point $0$ lies in the convex hull of $A$, say $0\in(a_{2j^*},a_{2j^*+1})$, then the set of $n+2$ points $\{\zeta_1,\ldots,\zeta_{n-\ell}\}\cup\{a_1\}\cup\{a_2,a_4,\ldots,a_{2\ell}\}\cup\{a_{2j^*+1}\}$ satisfies condition \eqref{eq-delta}.
\end{proof}


Next, we prove that the inverse image of any (suitable normed) minimal residual polynomial on a real set is again a real set.


\begin{lemma}\label{Lemma-InvImageR}
Let $n\in\N$, let $S\subset\R$ be compact with at least $n+1$ points and $0\notin{S}$, let $R_n\in\P_n$ be the minimal residual polynomial on $S$ for the degree $n$ with minimum deviation $L_n$, and define $P_n:=R_n/L_n$. Then $A:=P_n^{-1}([-1,1])$ is the union of $\ell$ finite disjoint real intervals, where $1\leq\ell\leq{n}$ and $S\subseteq{A}\subset\R$.
\end{lemma}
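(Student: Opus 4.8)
The plan is to deduce the whole statement from Lemma~\ref{Lemma-RealInverseImage} applied to $P_n$, so the work reduces to checking its hypotheses. First I would record that $P_n$ has real coefficients: the polynomial $z\mapsto\overline{R_n(\bar z)}$ again lies in $\P_n$, has supremum norm $L_n$ on $S\subset\R$, and takes the value $1$ at $0$, so by the uniqueness of the minimal residual polynomial it must equal $R_n$; hence so does $\tfrac12\bigl(R_n(z)+\overline{R_n(\bar z)}\bigr)$, which has real coefficients. Next, $\|P_n\|_S=\|R_n\|_S/L_n=1$ and $P_n(0)=1/L_n\neq0$, so $0$ is not a zero of $P_n$. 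Observe also that once we know $A=P_n^{-1}([-1,1])\subset\R$, the inclusion $S\subseteq A$ is automatic, since every $x\in S$ is real with $P_n(x)\in[-1,1]$; and then Lemma~\ref{Lemma-RealInverseImage} delivers that $A$ is a union of $\ell$ disjoint finite real intervals with $1\leq\ell\leq\deg P_n\leq n$. So it suffices to show that $P_n$ has $\deg P_n$ simple real zeros and that $\min\{|P_n(y)|:P_n'(y)=0\}\geq1$.

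The engine for this is the alternation criterion of Theorem~\ref{Thm-Char}: there exist points $x_0<x_1<\dots<x_n$ in $S$ with $|P_n(x_j)|=\|P_n\|_S=1$ obeying the sign pattern \eqref{eq-delta}. Since $0\notin S$, there is at most one index $j^*$ with $x_{j^*}<0<x_{j^*+1}$, and in every other gap $(x_j,x_{j+1})$ the polynomial $P_n$ changes sign, so by the intermediate value theorem it has an odd number of zeros there (counted with multiplicity), in particular at least one; in the gap containing $0$, if it exists, $P_n$ has an even number of zeros, possibly none, and zeros in distinct gaps are distinct. Now count: there are $n$ sign-change gaps if $0\notin[x_0,x_n]$ and $n-1$ of them otherwise. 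Because $\deg P_n\leq n$ and a sum of $k$ positive odd integers is at least $k$ and congruent to $k$ modulo $2$, the count is forced: each sign-change gap carries exactly one, necessarily simple, zero; the gap containing $0$ carries none; and there is at most one further zero, which must then lie outside $[x_0,x_n]$, is simple, and is real (a real polynomial cannot have exactly one non-real zero). Thus $\deg P_n\in\{n-1,n\}$ and all zeros of $P_n$ are simple and real. (For $n=1$ one must exclude the degenerate possibility $P_n\equiv\pm1$, in which $A=\C$; this cannot occur once $P_n$ is nonconstant, in particular for $n\geq2$.)

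Finally I would bound $|P_n|$ at the critical points. By Rolle's theorem, between any two consecutive zeros of $P_n$ lies exactly one zero of $P_n'$, and these exhaust the (simple, real) zeros of $P_n'$. The point is that each interval between consecutive zeros of $P_n$ straddles at least one alternation point $x_j$: this follows by tracking how the zeros produced above interlace with the $x_j$ — between two consecutive zeros sits an $x_j$ from an adjacent sign-change gap, and the outermost interval, bounded by the possible extra zero outside $[x_0,x_n]$, straddles $x_0$ or $x_n$. Since $P_n$ vanishes at both endpoints of such an interval $[z,z']$ and is nonzero in its interior, $P_n$ is monotone on each side of its unique critical point $y$ there, whence $|P_n(y)|=\max_{[z,z']}|P_n|\geq|P_n(x_j)|=1$. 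Hence $\min\{|P_n(y)|:P_n'(y)=0\}\geq1$, Lemma~\ref{Lemma-RealInverseImage} applies, and the proof is complete. The hard part will be the zero-count when $\deg P_n=n$ and $0$ is interior to $[x_0,x_n]$: here one must use the parity-of-sign-changes bookkeeping to push the "extra" zero out of every sign-change gap and out of the gap around $0$, and then verify that the interlacing — and with it the extremum inequality for the outermost critical point — is not disturbed.
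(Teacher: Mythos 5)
Your proposal is correct and takes essentially the paper's route: the alternation points from Theorem~\ref{Thm-Char} force simple real zeros of $P_n$ (with at most one extra zero, necessarily real and outside $[x_0,x_n]$, when $0$ separates the alternation points and $\deg P_n=n$), the straddled alternation points give $|P_n(y)|\geq 1$ at every critical point $y$, and Lemma~\ref{Lemma-RealInverseImage} concludes; your parity count of the zeros of $P_n$ merely replaces the paper's count of the zeros of $R_n'$ in ruling out double or extra zeros. Your parenthetical caveat for $n=1$, $P_1\equiv 1$ (which indeed occurs whenever $S$ has points on both sides of $0$, making $A=\C$) flags a degenerate case that the paper's statement and proof pass over silently, so it is worth keeping.
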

\begin{proof}
Since $S$ is a real set, the minimal residual polynomial $R_n$ has real coefficients, i.e., for $x\in\R$ one has $R_n(x)\in\R$. Let $x_0,x_1,\ldots,x_n\in{S}$, $x_0<x_1<\ldots<x_n$, be the $n+1$ points which satisfy $|R_n(x_j)|=\|R_n\|_S$, $j=0,1,\ldots,n$, and relation\,\eqref{eq-delta} of Theorem\,\ref{Thm-Char}. We have to distinguish two cases:
\begin{enumerate}
\item[1.] $0\notin[x_0,x_n]$: In this case, the parameter $\delta_j$ is zero for all $j=0,1,\ldots,n-1$, thus $R_n$ has $n$ simple zeros $\xi_1,\xi_2,\ldots,\xi_n$ with
\[
x_0<\xi_1<x_1<\xi_2<x_2<\ldots<x_{n-1}<\xi_n<x_n.
\]
Hence, for each $j\in\{1,2,\ldots,n-1\}$, there is a point $y_j\in(\xi_j,\xi_{j+1})$ with $R_n'(y_j)=0$ and $|R_n(y_j)|\geq{L}_n$, which together with Lemma\,\ref{Lemma-RealInverseImage} gives the assertion.
\item[2.] There exists $j^*\in\{0,1,\ldots,n-1\}$ such that $x_{j^*}<0<x_{j^*+1}$. Then, by relation\,\eqref{eq-delta}, $R_n$ has $n-1$ zeros $\xi_1,\xi_2,\ldots,\xi_{n-1}$, for which
\[
x_0<\xi_1<x_1<\ldots<\xi_{j^*}<x_{j^*}<x_{j^*+1}<\xi_{j^*+1}<\ldots<x_{n-1}<\xi_{n-1}<x_n.
\]
Thus, there are $n-2$ points $y_j$ with $\xi_j<y_j<\xi_{j+1}$, $R_n'(y_j)=0$ and $|R_n(y_j)|\geq{L}_n$, $j=1,2,\ldots,n-2$. Now, there are two possibilities:
\begin{enumerate}
\item[2.1] The minimal residual polynomial $R_n$ is a polynomial of degree $n-1$.\\
Then all $n-1$ zeros $y_j$ of the derivative $R_n'$ are found, and, by Lemma\,\ref{Lemma-RealInverseImage}, $P_n^{-1}([-1,1])$ is the union of a finite number of (at most $n-1$) real intervals.
\item[2.2] The minimal residual polynomial $R_n$ is a polynomial of degree $n$.\\
We claim that the above constructed $n-1$ zeros $\xi_1,\xi_2,\ldots,\xi_{n-1}$ of $R_n$ are all simple and that there are no other zeros of $R_n$ in the interval $[x_0,x_n]$. Assume that one zero, say $\xi_k$, is a double zero of $R_n$; then it is a simple zero of $R_n'$, and therefore, in addition to this zero and the $n-2$ zeros $y_1,\ldots,y_{n-2}$ of $R_n'$, there is another zero of $R_n'$ in the interval $(\xi_k,x_k)$ (if $k\leq{j}^*$) or $(\xi_k,x_{k+1})$ (if $k>j^*$), respectively, which is a contradiction. One gets an analogous contradiction when assuming that, besides $\xi_1,\xi_2,\ldots,\xi_{n-1}$, there is another zero of $R_n$ in $[x_0,x_n]$.\\
Hence, there is a simple zero $\xi^*$ of $R_n$ in $\R\setminus[x_0,x_n]$. If $\xi^*<x_0$ or $x_n<\xi^*$, then there is a zero $y^*$ of $R_n'$ in $(\xi^*,\xi_1)$ or $(\xi_{n-1},\xi^*)$, respectively, for which $|R_n(y^*)|\geq{L}_n$.\\
Altogether, we have $n-1$ simple zeros $y$ of $R_n'$ with $|R_n(y)|\geq{L}_n$, which, by Lemma\,\ref{Lemma-RealInverseImage}, gives the assertion.
\end{enumerate}
\end{enumerate}
\end{proof}


An immediate consequence of the proof of Lemma\,\ref{Lemma-InvImageR} is the following corollary:


\begin{corollary}\label{Cor-nn}
Let $n\in\N$, and let $S\subset\R$ be compact with at least $n+1$ points and $0\notin{S}$. Then the minimal residual polynomial $R_n\in\P_n$ for the degree $n$ on $S$ is a polynomial of degree $n$ or of degree $n-1$.
\end{corollary}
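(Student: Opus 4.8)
The plan is to read the conclusion off from the structure already exposed in the proof of Lemma~\ref{Lemma-InvImageR}, supplemented by a short counting argument that rules out any degree smaller than $n-1$. Since $R_n$ has real coefficients (because $S\subset\R$), it suffices to count the real zeros that the alternation criterion of Theorem~\ref{Thm-Char} forces on $R_n$.

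First I would fix the $n+1$ alternation points $x_0<x_1<\dots<x_n$ in $S$ supplied by Theorem~\ref{Thm-Char}, together with the exponents $\delta_j\in\{0,1\}$, $j=0,\dots,n-1$. Because the $x_j$ are strictly increasing and $0\notin S$, the index $j$ with $x_j<0<x_{j+1}$ is unique if it exists at all; hence either $\delta_j=0$ for every $j$ (this is Case~1 of the proof of Lemma~\ref{Lemma-InvImageR}, $0\notin[x_0,x_n]$), or there is exactly one $j^\ast$ with $\delta_{j^\ast}=1$ while $\delta_j=0$ for all $j\neq j^\ast$ (Case~2). For each index $j$ with $\delta_j=0$ one has $R_n(x_j)=-R_n(x_{j+1})$ with $|R_n(x_j)|=|R_n(x_{j+1})|=\|R_n\|_S>0$, so by the intermediate value theorem $R_n$ has a zero in the open interval $(x_j,x_{j+1})$; these open intervals are pairwise disjoint, so the zeros obtained are distinct.

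Counting then finishes the proof: in Case~1 we get $n$ distinct real zeros of $R_n$, which forces $\deg R_n\geq n$, hence $\deg R_n=n$ as $R_n\in\P_n$; in Case~2 we get $n-1$ distinct real zeros, which forces $\deg R_n\geq n-1$, so together with $R_n\in\P_n$ we obtain $\deg R_n\in\{n-1,n\}$. The polynomial $R_n$ is not identically zero since $R_n(0)=1$, so its degree is well defined. In either case $\deg R_n\in\{n-1,n\}$, which is the assertion.

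I do not expect a genuine obstacle here; the only point that needs a little care is the remark that the ordering of the $x_j$ together with $0\notin S$ allows at most one sign-preserving step $\delta_j=1$, which is exactly what caps the possible loss of degree at one. (Subcases~2.1 and~2.2 in the proof of Lemma~\ref{Lemma-InvImageR} already display both possibilities $\deg R_n=n-1$ and $\deg R_n=n$ within Case~2, so both values in $\{n-1,n\}$ do occur.)
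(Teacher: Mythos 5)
Your argument is correct and is essentially the paper's own: the corollary is stated there as an immediate consequence of the zero-counting in the proof of Lemma~\ref{Lemma-InvImageR}, where exactly your observation (all $\delta_j=0$ except possibly one, since $0$ can lie in at most one gap $(x_j,x_{j+1})$) forces $n$ respectively $n-1$ distinct real zeros of $R_n$, hence $\deg R_n\in\{n-1,n\}$. Your explicit remark that at most one sign-preserving step can occur is precisely the point that caps the degree loss at one, so nothing is missing.
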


\section{An Inequality for the Minimum Deviation}


Let us introduce the notion of Green's function \cite{Ransford}. Let $S$ be the union of $\ell$ intervals, i.e.,
\begin{equation}\label{S}
S:=[a_1,a_2]\cup[a_3,a_4]\cup\ldots\cup[a_{2\ell-1},a_{2\ell}],
\end{equation}
with $a_1<a_2<\ldots<a_{2\ell}$. Then there exists a (uniquely determined) Green's function for $S\cc:=\overline{\C}\setminus{S}$ (where $\overline{\C}:=\C\cup\infty$) with pole at infinity, denoted by $g(z;S\cc,\infty)$ or, shorter, $g(z;S\cc)$. The Green's function is defined by the following three properties:
\begin{itemize}
\item $g(z;S\cc)$ is harmonic in $S\cc$;
\item $g(z;S\cc)-\log|z|$ is harmonic in a neighbourhood of infinity;
\item $g(z;S\cc)\to0$ as $z\to{S}$, $z\in{S}\cc$.
\end{itemize}
The Green's function $g$ has the following montonicity property: If
\[
\tilde{S}:=[\tilde{a}_1,\tilde{a}_2]\cup[\tilde{a}_3,\tilde{a}_4]
\cup\ldots\cup[\tilde{a}_{2\tilde{\ell}-1},\tilde{a}_{2\tilde{\ell}}],
\]
$\tilde{a}_1<\tilde{a}_2<\ldots<\tilde{a}_{2\tilde{\ell}}$, and $S\subset\tilde{S}$, i.e., $S\cc\supset\tilde{S}\cc$, then
\begin{equation}\label{Ineq-g}
g(z;S\cc)>{g}(z;\tS\cc)\qquad\text{for~all}~z\in\tilde{S}\cc.
\end{equation}
With the Green's function $g(z;S\cc)$, the estimated asymptotic convergence factor $\kappa(S)$ defined in \eqref{kappa} can be characterized by
\begin{equation}\label{kappa-g}
\kappa(S)=\exp(-g(0;S\cc)).
\end{equation}
This connection was first observed by Eiermann, Li and Varga\,\cite{ELV}, see also \cite{Kuijlaars} and \cite{DTT}.


Next, let us recall a result of Peherstorfer\,\cite{Peh1996} concerning the representation of the Green's function for the complement of inverse polynomial images, which we will need for the proof of our main result.


\begin{lemma}[Peherstorfer\,\cite{Peh1996}]\label{Lemma-GreenFunction}
Let $P_n\in\P_n\setminus\P_{n-1}$ be a polynomial of degree $n$, and let $A:=P_n^{-1}([-1,1])$. Then the Green's function for $A\cc:=\overline{\C}\setminus{A}$ is given by
\[
g(z,A\cc)=\tfrac{1}{n}\log\bigl|P_n(z)+\sqrt{P_n^2(z)-1}\bigr|,
\]
where for $\sqrt{\quad}$ that branch is chosen for which $\sgn\sqrt{x^2-1}=\sgn(x-1)$, $x\in\R\setminus[-1,1]$.
\end{lemma}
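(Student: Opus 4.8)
The plan is to verify directly that the right-hand side possesses the three defining properties of the Green's function for $A\cc$ with pole at infinity, and then to invoke its uniqueness. Write $\phi(w):=w+\sqrt{w^2-1}$, where the branch of the square root is the one specified in the statement; equivalently, $\phi$ is the inverse of the Joukowski map, mapping $\overline{\C}\setminus[-1,1]$ conformally onto $\{u\in\overline{\C}:|u|>1\}$. It is classical (see \cite{Ransford}) that $\phi$ is analytic and zero-free on $\overline{\C}\setminus[-1,1]$, that $\phi(w)\sim2w$ as $w\to\infty$, that $|\phi(w)|\to1$ as $w\to[-1,1]$, and that $\log|\phi(w)|=g(w;\overline{\C}\setminus[-1,1])$. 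Set
\[
G(z):=\tfrac1n\log\bigl|\phi(P_n(z))\bigr|=\tfrac1n\log\bigl|P_n(z)+\sqrt{P_n^2(z)-1}\bigr|.
\]

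Since $A=P_n^{-1}([-1,1])$, the polynomial $P_n$ maps $A\cc$ into $\overline{\C}\setminus[-1,1]$, so the composition $\phi\circ P_n$ is a well-defined, single-valued, zero-free analytic function on $A\cc\setminus\{\infty\}$ with a pole of order $n$ at infinity. Hence $G$ is harmonic on $A\cc\setminus\{\infty\}$, being locally the real part of $\frac1n$ times an analytic branch of $\log(\phi\circ P_n)$; this is the first property. For the behaviour at infinity, writing $P_n(z)=c_nz^n+\cdots$ with $c_n\neq0$ gives $\phi(P_n(z))\sim2c_nz^n$, so $G(z)-\log|z|\to\frac1n\log|2c_n|$ as $z\to\infty$; thus $G(z)-\log|z|$ is bounded and harmonic in a punctured neighbourhood of infinity and extends harmonically across $\infty$, which is the second property. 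Finally, if $z\to\zeta\in A$ with $z\in A\cc$, then $P_n(z)\to P_n(\zeta)\in[-1,1]$ and hence $|\phi(P_n(z))|\to1$, so $G(z)\to0$; as every point of a finite union of nondegenerate intervals is a regular boundary point for $A\cc$, this is the third property. By the uniqueness of the Green's function we conclude $G(z)=g(z;A\cc)$, as claimed.

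The one point demanding care is the single-valuedness of $\phi\circ P_n$ on $A\cc$: although $A\cc$ need not be simply connected when it consists of $\ell\geq2$ intervals, the identity $A=P_n^{-1}([-1,1])$ ensures that $P_n$ omits every value in the slit $[-1,1]$ on $A\cc$, so the globally single-valued $\phi$ on $\overline{\C}\setminus[-1,1]$ pulls back to a single-valued function. Equivalently, the two determinations of $\sqrt{P_n^2(z)-1}$ yield $\phi(P_n(z))$ and $1/\phi(P_n(z))$, and selecting the one of modulus $\geq1$ is a consistent global choice; this is exactly the content of the branch convention $\sgn\sqrt{x^2-1}=\sgn(x-1)$. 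Alternatively, the formula follows from the general polynomial pull-back identity $g(z;\overline{\C}\setminus P_n^{-1}(K))=\frac1n\,g(P_n(z);\overline{\C}\setminus K)$ for compact $K$, applied with $K=[-1,1]$ together with the classical expression for $g(\,\cdot\,;\overline{\C}\setminus[-1,1])$.
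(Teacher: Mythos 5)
The paper does not prove this lemma at all --- it is quoted verbatim from Peherstorfer \cite{Peh1996} --- so there is no internal proof to compare against; your self-contained verification is the natural one and it is essentially correct. You check that $G(z)=\tfrac1n\log|\phi(P_n(z))|$, with $\phi$ the exterior Joukowski inverse, satisfies the three defining properties (harmonicity on $A\cc$, the $\log|z|$ behaviour at infinity, vanishing boundary values) and then invoke uniqueness; the single-valuedness discussion and the identification of the stated branch convention with the choice $|\phi|\geq1$ are exactly the points that need care, and you handle them correctly. Two small refinements would make the argument watertight in the generality in which the lemma is stated. First, for a general polynomial $P_n$ the set $A=P_n^{-1}([-1,1])$ consists of Jordan arcs in $\C$, not of real intervals, so your regularity remark should read: every boundary point of $A\cc$ lies on a nondegenerate continuum (a component of $A$), hence is a regular boundary point; the ``union of intervals'' phrasing only covers the special case actually used later in the paper. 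Second, the uniqueness/maximum-principle step tacitly uses that $A\cc$ is a domain; this is true because each component of $P_n^{-1}(\overline{\C}\setminus[-1,1])$ must contain the (unique) preimage $\infty$ of $\infty$, but it deserves a sentence. Your closing observation that the lemma is an instance of the pull-back identity $g(z;\overline{\C}\setminus P_n^{-1}(K))=\tfrac1n\,g(P_n(z);\overline{\C}\setminus K)$ with $K=[-1,1]$ is a clean alternative route and is in fact closer in spirit to how the result is usually derived in the literature.
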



Now we are ready to state and prove our main result.


\begin{theorem}\label{Thm-LB}
Let $n\in\N$, and let $S$ be the union of a finite number of real intervals as in \eqref{S} with $0\notin{S}$. Let $L_n(S)$ and $\kappa(S)$ as in \eqref{Ln} and \eqref{kappa}, respectively. Then the inequality
\begin{equation}\label{LB}
L_n(S)\geq\frac{2\kappa(S)^n}{1+\kappa(S)^{2n}}
\end{equation}
holds. Equality is attained in \eqref{LB} if and only if there exists a polynomial $P_n\in\P_n\setminus\P_{n-1}$ of degree $n$ such that $S=P_n^{-1}([-1,1])$. If, in addition to $S=P_n^{-1}([-1,1])$, the point zero lies in the convex hull of $S$, then
\[
L_n(S)=L_{n+1}(S).
\]
\end{theorem}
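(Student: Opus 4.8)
The plan is to work with the normalized polynomial $P_n := R_n/L_n(S)$, where $R_n$ is the minimal residual polynomial on $S$. By Lemma~\ref{Lemma-InvImageR}, the set $A := P_n^{-1}([-1,1])$ is a union of finitely many real intervals with $S \subseteq A$ and $0 \notin A$. The key chain of reasoning is: first, $\kappa(S) \le \kappa(A)$ if $A$ could be larger than $S$, but actually we want the reverse comparison through Green's functions; since $S \subseteq A$, monotonicity \eqref{Ineq-g} gives $g(0; S\cc) \ge g(0; A\cc)$, hence by \eqref{kappa-g} we get $\kappa(S) \le \kappa(A)$. Wait --- I need the inequality in the direction that produces a \emph{lower} bound on $L_n(S)$, so let me instead compute things directly on $A$.

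The central computation: by Corollary~\ref{Cor_InvPolIm} applied to $A$, we have $L_n(A) = 1/|P_n(0)|$, and since $S \subseteq A$ we have $L_n(S) \ge L_n(A) = 1/|P_n(0)|$... but that is the wrong direction too. The correct observation is that $P_n(0) = R_n(0)/L_n(S) = 1/L_n(S)$, so $|P_n(0)| = 1/L_n(S)$ exactly, and $R_n$ restricted to $A$ is (after renormalization) the minimal residual polynomial on $A$ by Corollary~\ref{Cor_InvPolIm}, giving $L_n(A) = 1/|P_n(0)| = L_n(S)$. Now apply Lemma~\ref{Lemma-GreenFunction} at $z = 0$: since $0 \notin A$ and $0$ is real, $P_n(0) \in \R \setminus (-1,1)$ (indeed $|P_n(0)| = 1/L_n(S) \ge 1$ because $L_n(S) \le 1$, as the constant polynomial $1$ has norm $1$ if... no, because $1 \in S$? not necessarily --- but $L_n(S) \le 1$ need not hold). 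Let me just set $t := |P_n(0)| \ge 1$; actually $t > 1$ since $0 \notin A$ strictly. Then $g(0; A\cc) = \tfrac{1}{n} \log(t + \sqrt{t^2-1})$, so $\kappa(A)^n = \exp(-n\, g(0;A\cc)) = (t + \sqrt{t^2-1})^{-1} = t - \sqrt{t^2-1}$. Therefore $\kappa(A)^n + \kappa(A)^{-n} = 2t$, i.e. $t = \tfrac12(\kappa(A)^n + \kappa(A)^{-n})$, and since $L_n(S) = L_n(A) = 1/t$ we obtain $L_n(A) = 2\kappa(A)^n/(1 + \kappa(A)^{2n})$. Finally, by monotonicity of Green's functions \eqref{Ineq-g} (using $S \subseteq A$), $g(0;S\cc) \ge g(0;A\cc)$, hence $\kappa(S) \le \kappa(A) < 1$; since the function $x \mapsto 2x^n/(1+x^{2n})$ is increasing on $(0,1)$, we conclude $L_n(S) = 2\kappa(A)^n/(1+\kappa(A)^{2n}) \ge 2\kappa(S)^n/(1+\kappa(S)^{2n})$, which is \eqref{LB}.

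For the equality characterization: equality forces $\kappa(S) = \kappa(A)$, hence $g(0;S\cc) = g(0;A\cc)$; by the strict monotonicity in \eqref{Ineq-g} this can only happen if $S = A$, i.e., $S$ itself is an inverse polynomial image $P_n^{-1}([-1,1])$ with $P_n$ of exact degree $n$. Conversely, if $S = P_n^{-1}([-1,1])$ for some $P_n \in \P_n \setminus \P_{n-1}$, then $R_n := P_n/P_n(0)$ is by Corollary~\ref{Cor_InvPolIm} the minimal residual polynomial and the same computation gives equality. For the last assertion, if in addition $0$ lies in the convex hull of $S = P_n^{-1}([-1,1])$, then Corollary~\ref{Cor_InvPolIm} states directly that $R_n$ is also the minimal residual polynomial for degree $n+1$ on $S$, i.e., $L_{n+1}(S) = L_n(S)$.

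The main obstacle I anticipate is keeping the direction of all inequalities straight --- in particular making sure that the Green's function monotonicity is applied correctly (larger set, smaller Green's function, hence larger $\kappa$) and that one carefully checks $0 \notin A$ (not merely $0 \notin S$), which is exactly what Lemma~\ref{Lemma-InvImageR} provides and which is essential for the branch choice in Lemma~\ref{Lemma-GreenFunction} and for the evaluation $P_n(0) \in \R\setminus[-1,1]$. A secondary technical point is confirming $|P_n(0)| > 1$ strictly (rather than $\ge 1$) so that $\kappa(S) < 1$ and the monotonicity of $x \mapsto 2x^n/(1+x^{2n})$ on the open interval $(0,1)$ can be invoked without boundary issues; this follows since $0 \notin A$ implies $P_n(0) \notin [-1,1]$.
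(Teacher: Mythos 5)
Your overall route is the same as the paper's (normalize $R_n$ to $P_n=R_n/L_n$, pass to $A=P_n^{-1}([-1,1])$, use Peherstorfer's Green's function formula at $0$, then Green's function monotonicity), but there is a genuine gap: you tacitly assume throughout that $R_n$, and hence $P_n$, has exact degree $n$. By Corollary\,\ref{Cor-nn} the minimal residual polynomial for the degree $n$ on a real set may have degree $n-1$ (and this really happens, e.g.\ precisely in the situation of the last claim of the theorem, where $L_{n+1}(S)=L_n(S)$). Lemma\,\ref{Lemma-GreenFunction} and Corollary\,\ref{Cor_InvPolIm} are stated for polynomials of exact degree, and the exponent in the Green's function formula is the actual degree: if $\deg P_n=m=n-1$, then $g(0;A\cc)=\tfrac1m\log\bigl(t+\sqrt{t^2-1}\bigr)$, so your identity $\kappa(A)^n=t-\sqrt{t^2-1}$, and with it $L_n(S)=2\kappa(A)^n/(1+\kappa(A)^{2n})$, is false in that case. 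The statement can be repaired exactly as the paper does: work with $m\in\{n-1,n\}$, obtain $L_n(S)=2\kappa(A)^m/(1+\kappa(A)^{2m})$, and add the extra step $2\kappa^m/(1+\kappa^{2m})\geq 2\kappa^n/(1+\kappa^{2n})$, valid since $\kappa<1$ and $m\leq n$ (monotonicity of $x\mapsto \kappa^x+\kappa^{-x}$). Without this step your chain of equalities simply does not cover the degree-$(n-1)$ case.

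The same omission infects your equality characterization. From equality you deduce $\kappa(S)=\kappa(A)$ and hence $S=A$, and then assert that $S=P_n^{-1}([-1,1])$ with $P_n$ of \emph{exact degree $n$}; but if $\deg P_n=n-1$ one can have $S=A$ while equality in \eqref{LB} fails (indeed then $L_n(S)=2\kappa(S)^{n-1}/(1+\kappa(S)^{2(n-1)})>2\kappa(S)^n/(1+\kappa(S)^{2n})$). This is precisely case (i) in the paper's proof, which you never rule in or out: you must argue that equality forces \emph{both} $S=A$ \emph{and} $m=n$, the first via strict monotonicity of the Green's function under proper inclusion, the second via strictness of the inequality between the two expressions in $m$ and $n$ when $m<n$. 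The remaining ingredients of your argument (the evaluation $|P_n(0)|=1/L_n(S)>1$ from $0\notin A$, the monotonicity of $x\mapsto 2x^n/(1+x^{2n})$ on $(0,1)$, the converse direction via Corollary\,\ref{Cor_InvPolIm}, and the final claim $L_{n+1}(S)=L_n(S)$) are correct and match the paper.
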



\begin{proof}
Let $R_n$ be the minimal residual polynomial for the degree $n$ on $S$ with minimum deviation $L_n\equiv{L}_n(S)<1$. Define $\tR_n(x):=R_n(x)/L_n$, thus $\tR_n(0)=1/L_n$. Define $\tS:=\tR_n^{-1}([-1,1])$; then, by Lemma\,\ref{Lemma-InvImageR}, $S\subseteq\tS\subset\R$ and $0\notin\tS$. By Corollary\,\ref{Cor-nn}, $\tR_n$ is a polynomial of degree $m$, where $m=n$ or $m=n-1$. By Lemma\,\ref{Lemma-GreenFunction}, the Green's function for $\tS\cc$ is
\begin{equation}\label{GreenFunction}
g(z;\tS\cc)=\tfrac{1}{m}\log\bigl|\tR_n(z)+\sqrt{\tR_n^2(z)-1}\bigr|,\qquad{z}\in\tS\cc.
\end{equation}
Since $S\cc\supseteq\tS\cc$ and $0\in\tS\cc$, by \eqref{Ineq-g},
\begin{equation}\label{Ineq-g0}
g(0;S\cc)\geq{g}(0;\tS\cc).
\end{equation}
Hence,
\begin{align*}
\eqref{GreenFunction}~\text{and}~\eqref{Ineq-g0}&\Rightarrow
g(0;S\cc)\geq\tfrac{1}{m}\log\bigl(\tR_n(0)+\sqrt{\tR_n^2(0)-1}\bigr)\\
&\Rightarrow\exp(m\cdot{g}(0;S\cc))\geq\frac{1}{L_n}+\sqrt{\frac{1}{L_n^2}-1}\\
&\Rightarrow\frac{1}{L_n}\leq\frac{1}{2}\bigl(\exp(m\cdot{g}(0;S\cc))+\exp(-m\cdot{g}(0;S\cc))\bigr)\\
&\Rightarrow{L_n}\geq\frac{2}{\kappa(S)^m+\kappa(S)^{-m}}
=\frac{2\kappa(S)^m}{1+\kappa(S)^{2m}}\geq_{(*)}\frac{2\kappa(S)^n}{1+\kappa(S)^{2n}},
\end{align*}
and inequality \eqref{LB} is proven.

Now suppose that there exists a polynomial $P_n$ of degree $n$ such that $S=P_n^{-1}([-1,1])$. Then, by Corollary\,\ref{Cor_InvPolIm}, $R_n(x)=P_n(x)/P_n(0)$ is the minimal residual polynomial with minimum deviation $L_n\equiv{L}_n(S)=1/|P_n(0)|$, where in addition $L_{n+1}(S)=L_n(S)$ holds if $0$ lies in the convex hull of $S$. Thus $\tR_n(x)=R_n(x)/L_n=\pm{P}_n(x)$ and $\tS=S$, hence equality is attained in \eqref{Ineq-g0}. Since $m=n$, equality is attained in $(*)$. Thus, altogether, equality is attained in \eqref{LB}.

On the other hand, suppose that there is no polynomial $P_n$ of degree $n$ such that $S=P_n^{-1}([-1,1])$. Then either (i) $\tS:=\tR_n^{-1}([-1,1])=S$ and $\tR_n$ is a polynomial of degree $n-1$; or (ii) $\tS\supset{S}$. In case (i), the ``$\geq$'' in $(*)$ is in fact a ``$>$'' and we get $L_n(S)>2\kappa(S)^n/(1+\kappa(S)^{2n})$. In case (ii), by \eqref{Ineq-g} and the above part of the proof, we get again $L_n(S)>2\kappa(S)^n/(1+\kappa(S)^{2n})$.
\end{proof}


\begin{remark}
\begin{enumerate}
\item Let us mention that the factor $2$, which occurs in inequality \eqref{LB}, also appears when estimating the norm of the $n$-th Chebyshev polynomial on a compact real set \cite{Sch-2008-2}, see also the papers of Totik\,\cite{Totik-2010} and Widom\,\cite{Widom}.
\item If $S$ is the union of two real intervals, sharp estimates for $\kappa(S)$ in terms of elementary functions of the endpoints of the intervals are given in \cite{Sch-2010}.
\end{enumerate}
\end{remark}


From the proof of Theorem\,\ref{Thm-LB}, we obtain a refinement for the Bernstein--Walsh lemma \cite[Thm.\,5.5.7]{Ransford} for the case of several real intervals and real arguments.


\begin{corollary}\label{Cor}
Let $K$ be the union of a finite number of real intervals. Then for any polynomial $Q_n$ of degree $n$,
\[
\frac{|Q_n(x)|}{\|Q_n\|_K}\leq\frac{1}{2}\bigl(\exp(n\cdot{g}(x;K\cc))+\exp(-n\cdot{g}(x;K\cc))\bigr)\qquad(x\in\R\setminus{K}),
\]
where $g(x;K\cc)$ denotes the Green's function of $K\cc$. Equality is attained if and only if $Q_n$ is such that $Q_n^{-1}([-1,1])=K$.
\end{corollary}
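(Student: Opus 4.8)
The plan is to mimic the proof of Theorem~\ref{Thm-LB} almost verbatim, but starting from an arbitrary polynomial $Q_n$ of degree $n$ rather than the minimal residual polynomial. First I would normalize: set $P_n := Q_n/\|Q_n\|_K$, so that $\|P_n\|_K = 1$ and hence $P_n(K) \subseteq [-1,1]$. Fix a point $x \in \R \setminus K$. If $P_n$ actually has degree $n$ and its inverse image $A := P_n^{-1}([-1,1])$ is a subset of $\R$, then Lemma~\ref{Lemma-GreenFunction} applies directly and gives $g(z;A\cc) = \tfrac1n \log|P_n(z) + \sqrt{P_n^2(z)-1}|$; combined with $K \subseteq A$ and the monotonicity \eqref{Ineq-g} of the Green's function, evaluated at $z = x$, this yields $g(x;K\cc) \geq \tfrac1n \log\bigl(|P_n(x)| + \sqrt{P_n^2(x)-1}\bigr)$ (using that $|P_n(x)|\ge 1$ off $[-1,1]$ so the logarithm is of a real number $\ge 1$), and exponentiating and solving for $|P_n(x)|$ in the same way as in the chain of implications in the theorem's proof produces $|P_n(x)| \leq \tfrac12(\exp(n\,g(x;K\cc)) + \exp(-n\,g(x;K\cc)))$, which is the claim after multiplying by $\|Q_n\|_K$.

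The main obstacle is that $Q_n$ is a completely arbitrary polynomial, so neither $A \subseteq \R$ nor $\deg Q_n = n$ need hold, and Lemma~\ref{Lemma-GreenFunction} as stated requires a genuine degree-$n$ polynomial with a \emph{real} inverse image. I would handle this by a perturbation/normalization argument: first, WLOG $Q_n$ has real coefficients, since for $x \in \R$ we may replace $Q_n$ by $\re Q_n$ or by a suitable rotation without decreasing $|Q_n(x)|/\|Q_n\|_K$ (more carefully, $|Q_n(x)|^2 = (\re Q_n(x))^2 + (\operatorname{Im}Q_n(x))^2$ and one of the two real polynomials $\re Q_n$, $\operatorname{Im}Q_n$ has sup-norm on $K$ at most $\|Q_n\|_K$ and value at $x$ at least $|Q_n(x)|/\sqrt2$ — actually the cleaner route is to note equality-independent bounds come from the classical Bernstein--Walsh lemma for complex arguments, so here one only needs the refinement for real $Q_n$ and real $x$, and $\re(e^{\ii\theta}Q_n)$ for the right $\theta$ has the same norm bound and attains $|Q_n(x)|$ at $x$). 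Second, if $A := P_n^{-1}([-1,1]) \not\subseteq \R$ or $\deg Q_n < n$, I would approximate: perturb $Q_n$ slightly to a nearby degree-$n$ polynomial $\tilde Q_n$ with $\|\tilde Q_n\|_K \le 1+\varepsilon$, $|\tilde Q_n(x)| \ge |Q_n(x)| - \varepsilon$, and $\tilde Q_n$ having $n$ simple real zeros with all critical values of modulus $\ge 1$ — this is possible because such polynomials (Chebyshev-like on $K$) are dense enough among normalized polynomials for this purpose, and then apply Lemma~\ref{Lemma-RealInverseImage} to conclude $\tilde Q_n^{-1}([-1,1]) \subseteq \R$; letting $\varepsilon \to 0$ recovers the inequality for $Q_n$.

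Alternatively, and more in the spirit of the paper, I would avoid perturbation entirely by invoking the general version of Lemma~\ref{Lemma-GreenFunction}: for \emph{any} $P_n \in \P_n \setminus \P_{n-1}$ (not assuming the inverse image is real) one still has $g(z; \P_n^{-1}([-1,1])^{\operatorname{c}}) = \tfrac1n \log|P_n(z) + \sqrt{P_n^2(z)-1}|$ as a general fact about preimages of $[-1,1]$ under polynomial maps (this is the unrestricted form of Peherstorfer's result, with the inverse image now a union of Jordan arcs), together with the monotonicity of Green's functions under inclusion of compact sets $K \subseteq A$. Evaluating at the real point $x \notin K$ and using that $g$ is the \emph{real-valued} Green's function while $|P_n(x) + \sqrt{P_n^2(x)-1}|$ is comparable to $|P_n(x)| + \sqrt{|P_n(x)|^2-1}$ (exactly equal when $P_n(x) \in \R$ with $|P_n(x)| \ge 1$, which holds after the reduction to real $Q_n$), the same algebra as before closes the argument. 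For the equality characterization I would argue as in the last paragraph of the proof of Theorem~\ref{Thm-LB}: equality forces equality in the monotonicity \eqref{Ineq-g}, which forces $K = A = Q_n^{-1}([-1,1])$ (and in particular $Q_n$ must genuinely have degree $n$); conversely if $Q_n^{-1}([-1,1]) = K$ then $A = K$, $\deg Q_n = n$, and by Corollary~\ref{Cor_InvPolIm} the normalized $Q_n$ is (up to sign) the Chebyshev-type polynomial for $K$, so equality holds throughout. I expect the density/perturbation step, or equivalently the careful invocation of the unrestricted form of Lemma~\ref{Lemma-GreenFunction}, to be the one requiring the most attention; the rest is the already-established computation.
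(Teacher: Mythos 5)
Your first repair step---the perturbation/density argument---is the genuine gap. By Lemma~\ref{Lemma-RealInverseImage}, a real polynomial of degree $n$ has real inverse image of $[-1,1]$ only if \emph{every} critical value has modulus at least $1$. An arbitrary normalized polynomial can violate this robustly: for $K=[-1,1]$ and $Q_2(t)=t^2$ (sup norm $1$ on $K$, critical value $0$), every polynomial whose coefficients are close to those of $Q_2$ still has a critical value near $0$, hence a non-real inverse image, while the requirement $\|\tilde Q_n\|_K\le 1+\varepsilon$ forbids inflating that critical value. So polynomials with real inverse image are \emph{not} dense among normalized polynomials, and the $\varepsilon\to0$ limit never gets started. (If you weaken ``nearby'' to only the functional conditions $\|\tilde Q_n\|_K\le1+\varepsilon$, $|\tilde Q_n(x)|\ge|Q_n(x)|-\varepsilon$, a suitable $\tilde Q_n$ does exist---but the natural candidate is precisely the minimal residual polynomial with respect to the point $x$, whose real inverse image is the content of Lemma~\ref{Lemma-InvImageR}; that is the paper's route, not a density argument.) Your third route, by contrast, is sound: Lemma~\ref{Lemma-GreenFunction} as stated does not assume $A\subset\R$, and together with the monotonicity of Green's functions under inclusion of general compact sets (which goes beyond \eqref{Ineq-g} as stated, but is standard), the reduction to real coefficients via $\re(\e^{\ii\theta}Q_n)$, and the trivial case $|Q_n(x)|\le\|Q_n\|_K$ (where the right-hand side is $\ge1$), it yields the inequality by the classical Bernstein--Walsh computation.

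The paper's own proof avoids all of this by a reduction you did not consider: fix $\xi=x\in\R\setminus K$, let $R_n(\,\cdot\,;\xi)$ be the minimal residual polynomial on $K$ normalized by $R_n(\xi;\xi)=1$, and note the elementary extremality inequality \eqref{Ineq-Cor}, $|Q_n(\xi)|/\|Q_n\|_K\le 1/\|R_n(\,\cdot\,;\xi)\|_K$ for every $Q_n\in\P_n$ (otherwise $Q_n/Q_n(\xi)$ would beat $R_n(\,\cdot\,;\xi)$). Translating $K$ to $S:=K-\xi$, the right-hand side equals $1/L_n(S)$, and the chain of inequalities in the proof of Theorem~\ref{Thm-LB}---which needs reality of the inverse image only for the normalized \emph{minimal residual} polynomial, guaranteed by Lemma~\ref{Lemma-InvImageR}---bounds $1/L_n(S)$ by $\tfrac{1}{2}\bigl(\exp(n\,g(\xi;K\cc))+\exp(-n\,g(\xi;K\cc))\bigr)$. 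Hence no perturbation, no complex-arc Green's functions, and no general potential theory are required; the whole point of passing to the extremal polynomial is that the reality property has already been established there. Finally, your equality discussion is only a sketch: the ``only if'' direction needs the maximum principle applied at the single point $x$ to the difference of the Green's functions of $K\cc$ and $A\cc$, an argument excluding a drop of degree, and, for complex $Q_n$, a step showing that $Q_n$ itself (not merely $\re(\e^{\ii\theta}Q_n)$) has inverse image $K$; the paper leaves this implicit as well, so it is a minor point next to the density gap.
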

\begin{proof}
Let $\xi\in\R\setminus{K}$ be fixed, and let $R_n(\,\cdot\,;\xi)\in\P_n$ be uniquely defined by
\[
\|R_n(\,\cdot\,;\xi)\|_K=\min\bigl\{\|P_n\|_K:P_n\in\P_n,P_n(\xi)=1\bigr\},
\]
i.e., $R_n(\,\cdot\,;\xi)$ is the minimal residual polynomial for the degree $n$ on $K$ with respect to $\xi$. We claim that for any polynomial $P_n\in\P_n$,
\begin{equation}\label{Ineq-Cor}
\frac{|P_n(\xi)|}{\|P_n\|_K}\leq\frac{1}{\|R_n(\,\cdot\,;\xi)\|_K}.
\end{equation}
Indeed, suppose on the contrary that there exists a polynomial $U_n\in\P_n$ such that $U_n(\xi)/\|U_n\|_K>1/\|R_n(\,\cdot\,;\xi)\|_K$. Define $\tilde{U}_n(x):=U_n(x)/U_n(\xi)$. Then $\tilde{U}_n(\xi)=1$ and, by definition of $R_n(\,\cdot\,;\xi)$, $\|\tilde{U}_n\|_K=\|U_n\|_K/U_n(\xi)<\|R_n(\,\cdot\,;\xi)\|_K$, which is a contradiction, and inequality \eqref{Ineq-Cor} is true.

Now by shifting $K$ to $S:=K-\xi$ it is clear that $L_n(S)=\|R_n(\,\cdot\,;\xi)\|_K$ and, for the corresponding Green's functions, $g(z+\xi;K\cc)=g(z;S\cc)$ for $z\in{S}\cc$. Thus, by the proof of Theorem\,\ref{Thm-LB},
\begin{align*}
\frac{1}{\|R_n(\,\cdot\,;\xi)\|_K}=\frac{1}{L_n(S)}
&\leq\frac{1}{2}\bigl(\exp(n\cdot{g}(0;S\cc))+\exp(-n\cdot{g}(0;S\cc))\bigr)\\
&=\frac{1}{2}\bigl(\exp(n\cdot{g}(\xi;K\cc))+\exp(-n\cdot{g}(\xi;K\cc))\bigr),
\end{align*}
which together with \eqref{Ineq-Cor} gives the assertion.
\end{proof}

{\bf Acknowledgement.} The author would like to thank Vilmos Totik for pointing out that Theorem\,\ref{Thm-LB} implies a refinement of the Bernstein--Walsh lemma given in Corollary\,\ref{Cor}.


\bibliographystyle{amsplain}

\bibliography{LowerBoundRn}

\end{document}